\newtheorem{theorem}{Theorem}[section] 
\newtheorem{lemma}[theorem]{Lemma}     
\newtheorem{definition}[theorem]{Definition}
\numberwithin{equation}{section}
\newcommand{\R}{\mathbb R}
\newcommand{\C}{\mathbb C}
\newcommand{\Cd}{\overline{\mathbb C}}
\renewcommand{\O}{\mathcal O}
\newcommand{\T}{\mathcal T}
\newcommand{\Ta}{\mathrm T}
\newcommand{\diam}{\mathrm{diam}}
\begin{document}

\begin{center}
\Large
\textbf{No entire function with real multipliers in class $\mathcal{S}$}\\

\bigskip
\large
Agnieszka Bade\'nska
\footnote{
\textit{2000 Mathematics Subject Classification} 37F10 (primary), 30D05 (secondary).\\
\textbf{Keywords}: entire transcendental functions, periodic orbit, multiplier, invariant line field.}

\smallskip
\footnotesize
Faculty of Mathematics and Information Science\\
Warsaw University of Technology\\
ul. Koszykowa 75\\
00-662 Warszawa\\
Poland\\
badenska@mini.pw.edu.pl 
\end{center}

\normalsize
\begin{abstract}
We prove that there is no entire transcendental function in class~$\mathcal{S}$ with real multipliers of all repelling periodic orbits.
\end{abstract}

\section{Introduction}

Already Fatou in~{\cite[section 46]{fatou}} pointed out that if the~Julia set of a~rational function is a~smooth curve, then all periodic points in the~Julia set have real multipliers. Recently Eremenko and van~Strien proved the~converse statement {\cite[Theorem~1]{evs}}, i.e. if all repelling periodic orbits of a~rational function $f\colon\Cd\to\Cd$ (of degree at least~2) have real multipliers, then either the~Julia set is contained in a~circle or $f$~is a~Latt\'es map. Moreover, they gave a~detailed description of these rational maps whose Julia set is contained in a~circle.

Their result (as well as our previous considerations in~{\cite[Lemma~3.7]{rigidity}}) was a~motivation for us to study transcendental functions with real multipliers of all repelling periodic orbits. We prove the~following claim:
\begin{theorem}\label{main}
There exists no entire transcendental function in class~$\mathcal{S}$ with real multipliers of all repelling periodic orbits.
\end{theorem}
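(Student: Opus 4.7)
My plan is to argue by contradiction, adapting the invariant-line-field strategy of Eremenko--van~Strien to the transcendental setting. Suppose $f\in\mathcal{S}$ is a transcendental entire function with every repelling periodic orbit having real multiplier. At each repelling periodic point $z_0$ of period~$p$ and multiplier $\lambda\in\R$, the Koenigs linearization $\phi$ conjugates $f^p$ to $w\mapsto\lambda w$; since $\lambda$ is real, the preimage of the real axis under~$\phi$ is an $f^p$-invariant real-analytic arc through~$z_0$, whose tangent defines a line $\ell(z_0)\subset T_{z_0}\C$ preserved by $Df^p$. Pulling $\ell(z_0)$ back along all branches of $f^{-n}$ extends the line to the grand orbit of~$z_0$, and by density of repelling cycles in~$J(f)$ (Baker's theorem for transcendental entire maps) together with consistency of the real direction in overlapping linearizing charts, one obtains a measurable $f$-invariant line field~$\mu$ defined almost everywhere on~$J(f)$.

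The contradiction will come from the structure of~$f$ near~$\infty$ that class~$\mathcal{S}$ provides. Because $f\in\mathcal{S}$, every transcendental singularity of~$f^{-1}$ is logarithmic, and there exist unbounded simply connected logarithmic tracts $T_i$ on which $f$ maps univalently onto the complement of a compact disk. The Eremenko--Lyubich logarithmic change of variables converts the inverse branches of $f$ on these tracts into strong hyperbolic contractions of a right half-plane. Composing inverse branches along any itinerary of tracts produces a repelling periodic orbit of~$f$, whose multiplier is the corresponding product of derivatives and whose argument is a sum of angular contributions from the visited tracts. The remaining step is to exhibit an itinerary---typically alternating between two tracts with sufficiently different asymptotic directions at~$\infty$---for which the accumulated rotation of tangent vectors cannot preserve any prescribed direction~$\ell$; this rules out the invariant line field~$\mu$ and yields the desired contradiction.

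The main obstacle I anticipate is quantitative control of $\arg f'$ inside the logarithmic tracts. The standard Eremenko--Lyubich estimates govern only the modulus~$|f'|$ and the contraction of inverse branches in the hyperbolic metric, so a refined analysis is needed to track how $f'$ rotates tangent directions as orbits traverse a tract. A secondary difficulty is the measurability and consistency of~$\mu$ in the first step: although the definition at each periodic point is canonical, the line fields originating from different cycles must be glued into a single well-defined line field on~$J(f)$, which in the transcendental entire setting may have Lebesgue measure zero---complicating the direct use of measurable-rigidity machinery---and whose behavior at~$\infty$ must also be handled carefully.
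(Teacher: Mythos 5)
Your overall strategy (contradiction via an invariant line field, in the spirit of Eremenko--van~Strien) is the same as the paper's, but both of your main steps have genuine gaps. First, the construction of the line field: at a repelling point with \emph{real} multiplier $\lambda$, the differential of $f^p$ acts as multiplication by $\lambda$, so it preserves \emph{every} direction, not a distinguished one; the ``preimage of the real axis'' under the Koenigs map depends on the normalization $\phi'(z_0)$, which is an arbitrary nonzero complex number, so the line $\ell(z_0)$ is not canonical. Moreover, pulling back along the grand orbits of repelling cycles only defines directions on a countable set, which has Lebesgue measure zero, so ``a measurable line field a.e.\ on $J(f)$'' does not follow, and (as you yourself note) $J(f)$ may anyway have measure zero, so measurable-rigidity arguments on $J(f)$ give nothing. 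The ``consistency of the real direction in overlapping linearizing charts'' that you wave at is precisely the hard content: the paper proves (Lemmas~3.2--3.4, adapting Eremenko--van~Strien) that the deck transformations of a single \emph{global} linearization $\Psi$ at one repelling point are of the form $\pm z+c$, i.e.\ $\Psi$ is automorphic under a discrete group of isometries; only this makes $u=(\Psi'/|\Psi'|)^2\circ\Psi^{-1}$ well defined a.e.\ on $\C$ and, crucially, given by a locally univalent holomorphic map on open sets meeting $J(f)$.

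Second, your proposed source of contradiction does not work as described. Under the standing hypothesis every repelling cycle has real multiplier, so for any itinerary through the logarithmic tracts the resulting cycle's ``accumulated rotation'' is $0$ or $\pi$ by assumption, and a real multiplier preserves every line; there is no direction it ``cannot preserve.'' You would in effect have to prove, from tract geometry alone, that some cycle must have non-real multiplier, and you concede that the Eremenko--Lyubich machinery controls only $|f'|$ (and note that $f$ is a universal covering, not univalent, on a logarithmic tract), so the key estimate on $\arg f'$ is missing with no indication of how to obtain it. The paper avoids this entirely: once the line field is shown to be univalent on a neighbourhood of points of $J(f)$ and $f$-invariant, the contradiction is supplied by Rempe--van~Strien's theorem on the absence of such invariant line fields for transcendental functions (Theorem~6.1 of~\cite{rvs}), not by any tract-by-tract argument. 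As it stands, your proposal leaves both the consistency/automorphy step and the final contradiction unproved.
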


In order to prove it, we will construct on~$\C$ an~invariant line field, univalent in a~neighbourhood of the~Julia set. This will contradict Theorem~6.1 from~\cite{rvs} saying that such a~line field cannot exist in the~transcendental case. The~construction of a~line field uses a~global linearization function associated with a~repelling periodic point that posses only regular preimages. We will show that this linearization function is automorphic with respect to a~discrete group of affine maps. The~proof of this part strongly relies on methods introduced by Eremenko and van~Strien in~{\cite[Sections 1.1-1.2]{evs}}. 

Note that to obtain the~contradiction it is enough to assume that $f$~has real multipliers of repelling cycles in some relatively open set in the~Julia set. Let us also remark that the~proof remains valid if $f$ is an~entire transcendental function from class~$\mathcal{B}$ (i.e. its finite singular values form a~bounded set) for which there exists in the~Julia set a~relatively open set disjoint from orbits of singular values.

\section{Preliminaries}

Throughout the~entire article $f\colon\C\to\C$ denotes an~entire transcendental function from class~$\mathcal{S}$, i.e. with finitely many singular values. By singular value we mean critical value or finite asymptotic value. Note that in this case, similarly as for rational functions, only finitely many periodic points may belong to the~forward orbits of singular values. It~means that from among an~infinite number of repelling cycles we can always choose such that have only regular preimages.

Recall that the~Fatou set $F(f)$ consists of points in whose neighbourhood the~iterates~$\{f^n\}_{n\geq1}$ form a~normal family while the~Julia set is its complement $J(f)=\C\setminus{F(f)}$. The~Julia set may be equivalently defined as the~closure of all repelling periodic orbits of~$f$. Moreover, if $z\in{J(f)}$ is not an~exceptional value (i.e. its backward orbit is infinite), then
\begin{displaymath}
	J(f)=\overline{\bigcup_{n\geq1}f^{-n}(z)}.
\end{displaymath}
For these and more properties of Fatou and Julia sets see~\cite{berg}. We will also use the~fact that the~Hausdorff dimension of the~Julia set of an~entire transcendental function with finitely many singularities is greater than one (see~\cite{stallard}).

Our proof uses the~notion of invariant curves and unstable manifolds introduced in~{\cite[Section~1.1]{evs}}.
\begin{definition}
	A~simple curve $\gamma\colon(0,1)\to\C$ passing through a~repelling periodic point~$p$ of period~$n$ is called an~unstable manifold for~$p$ if there exists a~subarc $\gamma_*\subset\gamma$ containing~$p$ such that $f^n$~maps $\gamma_*$ diffeomorphically onto~$\gamma$.
	
	We say that $\gamma$~is an~invariant curve for $q\in{f^{-m}(p)}$ if $\gamma$~is contained in an~unstable manifold for~$p$, $q\in\gamma$ and $f^m(\gamma\cap{V})\subset\gamma$ for some neighbourhood~$V\ni{q}$.
\end{definition}
Note that this is not what we usually mean by an~unstable manifold for a~hyperbolic periodic point of a~smooth dynamical system. We decided, however, to apply the~notion from~\cite{evs} for the~convenience of the~reader, since our proof closely follows their.

As we mentioned in the~introduction our first aim will be to show that if $f$~has only real multipliers of repelling periodic orbits, then a~linearization function~$\Psi$ associated with a~repelling point (with regular preimages) is automorphic with respect to a~discrete group of isometries of the plane~$\C$ (see {\cite[Definition~2.4]{mayer}}).
\begin{definition}\label{aut}
A~holomorphic map $\Psi\colon\C\to\Cd$ is called automorphic with respect to a~discrete group of isometries $\Gamma\subset\mathrm{Isom}(\C)$ if:
\begin{itemize}
	\item[(1)] $\Psi\circ\gamma=\Psi$ for every $\gamma\in\Gamma$ and
	\item[(2)] $\Gamma$ acts transitively on fibers, i.e. whenever $\Psi(z_1)=\Psi(z_2)$, there is $\gamma\in\Gamma$ with $\gamma(z_1)=z_2$.
\end{itemize}
\end{definition}
It was shown in {\cite[Lemma~2.5]{mayer}} that the transitivity condition \emph{(2)} in the above definition can be replaced by the following weaker assertion:
\begin{itemize}
	\item[\emph{(2')}] \emph{$\Gamma$ acts transitively on one regular fibre, i.e. there is $w\in\Psi(\C)$ which is not a~critical value of~$\Psi$ such that for every pair $z_1,z_2\in\Psi^{-1}(w)$ we can find some $\gamma\in\Gamma$ with $\gamma(z_1)=z_2$.}
\end{itemize}

We will use the~linearization function to construct an~invariant line field on~$\C$ univalent in an~open set intersecting the~Julia set (cf.~{\cite[Definition~4.1]{rvs}}). Such line field cannot exist for transcendental functions by the~result of Rempe and van~Strien {\cite[Theorem~6.1]{rvs}}. Recall that an~invariant line field on~$\C$ is a~measurable choice of directions for any $z\in\C$ invariant under~$f$. Since any~direction (straight line through the~origin) is determined by two angles which differ in~$\pi$, it is convenient to associate an~invariant line field with a~measurable function $u\colon\C\to\mathrm{S}^1$ satisfying the~invariance condition
\begin{displaymath} u(f(z))=\left(\frac{f'(z)}{|f'(z)|}\right)^2u(z) \end{displaymath}
for almost all $z\in\C$.

\section{Proof of Theorem~\ref{main}}

Suppose $f\colon\C\to\C$ is an~entire transcendental function from class~$\mathcal{S}$ such that all repelling cycles have real multipliers. Choose a~repelling periodic point~$p\in{J(f)}$ which does not belong to the~forward orbit of singular values. Replacing if necessary $f$ by its iterate we may assume that $p$ is a~fixed point. 

Denote by~$\Psi$ a~local linearization of~$f$ at~$p$, i.e. a~univalent function defined on a~small disc~$\O_0$ centered at the~origin such that for $z$ close to~$0$,
\begin{equation}\label{lineariz} 
	\Psi(\lambda z)=f\circ\Psi(z), 
\end{equation}
where $\lambda=f'(p)$ is real, $\lambda>1$. Note that $\Psi$ is defined up to $\Psi'(0)\in\C\setminus\{0\}$ which can be chosen arbitrarily. We can extend $\Psi$ to a~global linearization $\Psi\colon\C\to\C$ satisfying the same relation~(\ref{lineariz}) by the~formula
\begin{displaymath} 
	\Psi(z)=f^n(\Psi(\lambda^{-n}z)), \quad n\geq1. 
\end{displaymath}
Note that, since $\Psi$ is univalent on~$\O_0$, it immediately implies the~following fact.
\begin{lemma}\label{regpreimages}
	If $p$ is any~repelling periodic point which is not an~iterate of a~singular value and $\Psi$ is a~global linearization at~$p$, then for any $z\in\Psi^{-1}(p)$ we have $\Psi'(z)\neq0$.
\end{lemma}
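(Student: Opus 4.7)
The plan is to exploit the self-similar definition $\Psi(z)=f^{n}(\Psi(\lambda^{-n}z))$ that extends $\Psi$ from $\O_0$ to all of $\C$, and to trace a critical point of $\Psi$ back either into the univalent disc $\O_0$ or into the forward orbit of a critical point of $f$, which by hypothesis cannot hit $p$.

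First I would fix an arbitrary $z\in\Psi^{-1}(p)$ and choose $n\geq 1$ so large that $w_n:=\lambda^{-n}z$ lies in~$\O_0$. By the extension formula,
\begin{equation*}
  \Psi(z) \;=\; f^{n}\bigl(\Psi(w_n)\bigr),
\end{equation*}
and differentiating via the chain rule gives
\begin{equation*}
  \Psi'(z) \;=\; (f^{n})'\bigl(\Psi(w_n)\bigr)\cdot\Psi'(w_n)\cdot\lambda^{-n}.
\end{equation*}
The scalar factor $\lambda^{-n}$ is obviously nonzero, and the factor $\Psi'(w_n)$ is nonzero because $w_n\in\O_0$ and $\Psi$ is univalent on $\O_0$. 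So everything reduces to showing that the derivative $(f^{n})'\bigl(\Psi(w_n)\bigr)$ does not vanish.

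Here is where the hypothesis on $p$ enters. Set $y_k:=f^{k}(\Psi(w_n))$ for $0\leq k\leq n$, so $y_0=\Psi(w_n)$ and $y_n=\Psi(z)=p$. By the product rule, $(f^{n})'(y_0)=\prod_{k=0}^{n-1}f'(y_k)$. If this product were zero, then some $y_k$ would be a critical point of~$f$, whence $y_{k+1}=f(y_k)$ would be a critical value of~$f$, in particular a singular value. But then $p=y_n=f^{n-k-1}(y_{k+1})$ would be an iterate of a singular value, contradicting the choice of~$p$ at the start of the proof of Theorem~\ref{main}. Therefore $(f^{n})'(y_0)\neq 0$, and combining the three nonzero factors yields $\Psi'(z)\neq 0$.

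I do not see a real obstacle here: the lemma is essentially a bookkeeping exercise once one has the self-similar definition of $\Psi$ and the observation that singular values visible to the derivative of $f^{n}$ are critical values (not asymptotic values), so the assumption that $p$ avoids the forward orbits of all singular values is amply strong. The only point requiring a touch of care is choosing $n$ large enough that $\lambda^{-n}z\in\O_0$, which is immediate from $\lambda>1$.
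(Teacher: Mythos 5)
Your proof is correct and is exactly the argument the paper has in mind: the paper states the lemma as an immediate consequence of the extension formula and the univalence of $\Psi$ on $\O_0$, and your write-up simply supplies the chain-rule factorization and the observation that a vanishing factor $f'(y_k)$ would force $p$ into the forward orbit of a critical (hence singular) value. No gap; the edge case $k=n-1$ is covered since $p$ itself is then a singular value, which the choice of $p$ excludes.
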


Choose any $Q\in\Psi^{-1}(p)\setminus\{0\}$, then $\Psi'(Q)\neq0$, so we can find a~topological disc $\O_1\ni{Q}$ on which $\Psi$ is univalent and such that $\Psi(\O_0)=\Psi(\O_1)$ (decreasing $\O_0$ if necessary). Hence, there exists a~biholomorphic map $\T\colon\O_0\to\O_1$ such that
\begin{displaymath} \T(0)=Q \quad \textrm{and} \quad \Psi\circ\T=\Psi \quad \textrm{on} \; \O_0. \end{displaymath}
Additionally, since $\O_0$ was chosen to be a~round disc, $\lambda^{-1}\O_0\subset\O_0$.

We are going to prove that $\T$ is actually of the~form $\pm{z}+Q$. To do this we will show that $\T'$ is real on an~infinite family of straight lines passing through the~origin. This part of the~proof closely follows \cite[sections~1.1--1.2]{rvs} however we needed to adapt it to the~case of entire transcendental function.  For the~convenience of the~reader we present it with all details and keep similar notation.

\begin{lemma}\label{seq}
For each $Q\in\Psi^{-1}(p)\setminus\{0\}$, there is a~sequence $z_n\to0$ such that $\lambda^nz_n\to{Q}$ and $\Psi(z_n)$ is a~repelling point of period~$n$. There exists a~neighbourhood $V_n\subset\O_0$ of~$z_n$ such that $f^n\colon\Psi(V_n)\to\Psi(\O_0)$ is biholomorphic.
\end{lemma}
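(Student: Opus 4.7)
The plan is to convert the existence of an $n$-periodic point of $f$ into a fixed-point problem for a holomorphic self-map built from $\T$. Combining the linearization relation $f^n(\Psi(z))=\Psi(\lambda^n z)$ with the identity $\Psi\circ\T=\Psi$ on $\O_0$, I observe that any $z_n\in\O_0$ satisfying
\begin{equation*}
	\lambda^n z_n=\T(z_n)
\end{equation*}
automatically gives $f^n(\Psi(z_n))=\Psi(\lambda^n z_n)=\Psi(\T(z_n))=\Psi(z_n)$, so that $\Psi(z_n)$ is $f^n$-fixed. Hence it suffices to find, for each large $n$, a solution $z_n\in\O_0$ of $z=\lambda^{-n}\T(z)$ that tends to $0$.

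To produce $z_n$, set $g_n(z):=\lambda^{-n}\T(z)$. Since $\O_1=\T(\O_0)$ is bounded and $\lambda>1$, the image $g_n(\O_0)=\lambda^{-n}\O_1$ lies in any prescribed neighbourhood of $0$ once $n$ is large enough, and in particular inside $\O_0$. On any compact subdisc of $\O_0$ the derivative $\T'$ is bounded, so $|g_n'|=\lambda^{-n}|\T'|$ tends to zero uniformly. Thus for $n$ sufficiently large $g_n$ is a strict contraction of a small closed disc around $0$ into itself, and Banach's fixed-point theorem supplies a unique $z_n$ in this disc with $z_n=\lambda^{-n}\T(z_n)$. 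It follows at once that $z_n\to 0$ and $\lambda^n z_n=\T(z_n)\to\T(0)=Q$.

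To verify that $\Psi(z_n)$ is repelling, differentiate $\Psi(\lambda^n\cdot)=f^n\circ\Psi$ at $z_n$ and differentiate $\Psi\circ\T=\Psi$ to get $\Psi'(\T(z))\T'(z)=\Psi'(z)$; combining the two yields
\begin{equation*}
	(f^n)'(\Psi(z_n))=\frac{\lambda^n\,\Psi'(\lambda^n z_n)}{\Psi'(z_n)}=\frac{\lambda^n}{\T'(z_n)},
\end{equation*}
whose modulus grows like $\lambda^n$, since $\T'(z_n)\to\T'(0)\neq 0$ by Lemma~\ref{regpreimages} (applied to $\Psi'(Q)\neq 0$, which makes $\T'(0)$ nonzero). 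For the last assertion take $V_n:=\lambda^{-n}\O_1$; this set lies in $\O_0$ for $n$ large and contains $z_n$ because $\lambda^n z_n=\T(z_n)\in\O_1$. Multiplication by $\lambda^n$ is a biholomorphism $V_n\to\O_1$, while $\Psi$ is univalent on $\O_1$ by the choice of that disc and univalent on $V_n$ for $n$ large (since $V_n$ shrinks to $0$, where $\Psi'(0)\neq 0$). Composing through $f^n\circ\Psi=\Psi(\lambda^n\cdot)$ delivers the biholomorphism $f^n\colon\Psi(V_n)\to\Psi(\O_1)=\Psi(\O_0)$.

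The main technical point is calibrating the contraction: the disc on which Banach's theorem is applied must shrink with $n$ (roughly to radius comparable to $|Q|/\lambda^n$) so that $g_n$ both self-maps it and is strictly contracting. Once that scale is identified, everything else reduces to direct manipulation of the two functional identities $\Psi(\lambda z)=f(\Psi(z))$ and $\Psi\circ\T=\Psi$.
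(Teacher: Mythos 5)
Your proof is correct, and it reorganizes the key step in a way that differs from the paper's. The paper takes $V_n=\lambda^{-n}\O_1$ directly, shows $f^n|_{\Psi(V_n)}=\Psi|_{\O_1}\circ\lambda^n\circ(\Psi|_{V_n})^{-1}$ is biholomorphic from $\Psi(V_n)$ onto the strictly larger domain $\Psi(\O_1)=\Psi(\O_0)$, and then invokes the standard fact that such an expanding biholomorphism has a repelling fixed point (equivalently, the inverse branch is a contraction of $\Psi(\O_0)$ into a compactly contained subset and so has an attracting fixed point). You instead work upstairs in the linearizing coordinate: conjugating that inverse branch by $\Psi$ is exactly your map $g_n=\lambda^{-n}\T$, and you make the contraction explicit via Banach's fixed-point theorem. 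This buys you two things the paper only obtains later or leaves implicit: the identity $\lambda^n z_n=\T(z_n)$ (which in the paper is derived separately as equation~(3.3) in the proof of Lemma~\ref{seqL}) and the explicit multiplier formula $(f^n)'(\Psi(z_n))=\lambda^n/\T'(z_n)$, which makes the repelling property and the convergence $\lambda^n z_n\to Q$ immediate; it also pins down $z_n\in V_n$ via $\T(z_n)\in\O_1$. Two small remarks: your closing worry that the Banach disc must shrink with $n$ is unnecessary --- a fixed closed disc $\overline{D}(0,r)\subset\O_0$ works for all large $n$, since $g_n(\O_0)=\lambda^{-n}\O_1$ eventually lies in it and $\sup_{\overline{D}(0,r)}|g_n'|\to0$, which is in fact what your own argument says; and the appeal to Lemma~\ref{regpreimages} for $\T'(0)\neq0$ is superfluous, since $\T\colon\O_0\to\O_1$ is biholomorphic, so $\T'$ is nowhere zero. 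Like the paper, you show $\Psi(z_n)$ is a repelling fixed point of $f^n$ rather than a point of exact period $n$, which is all that is used later.
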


\begin{proof}
Let $V_n=\lambda^{-n}\O_1$ and take $n$ so large, say $n\geq{N}$, that $\overline{V_n}\subset\O_0$. Note that
\begin{displaymath} 
	f^n|_{\Psi(V_n)}=\Psi|_{\O_1}\circ\lambda^n\circ(\Psi|_{V_n})^{-1} 
\end{displaymath}
is univalent. Moreover,
\begin{displaymath} 
	f^n(\Psi(V_n))=f^n(\Psi(\lambda^{-n}\O_1))=\Psi(\O_1)=\Psi(\O_0),
\end{displaymath}
hence there exists a~point $z_n\in{V_n}$ such that $\Psi(z_n)$ is a~repelling fixed point for~$f^n$. We obtain this way a~whole sequence $z_n\to0$, for $n\geq{N}$, as stated in the~lemma.
\end{proof}

\begin{lemma}\label{seqL}
Let $p$, $Q$, $\Psi$ and $\T\colon\O_0\to\O_1$ be as above. Denote by $L$ the~straight line through $0$ and~$Q$ and let $\gamma=\Psi(L\cap\O_0)$. Then:
	\begin{enumerate}
		\item The~curve $\gamma$ is an~unstable manifold for~$p$;
		\item For $n$ large enough there exists a~sequence $z_n\in{L}$ such that $z_n\to0$, $\lambda^n{z_n}\to{Q}$, $\Psi(z_n)$ is a~repelling point of period~$n$ and $\gamma$~is an~unstable manifold for~$\Psi(z_n)$;
		\item For $n$ large enough $\gamma$ is an~invariant curve for $\Psi(\lambda^{-n}Q)\in{f^{-n}(p)}$;
		\item $\Psi(L\cap\O_0)=\Psi(L\cap\O_1)$ and thus $\T\colon{L\cap\O_0}\to{L\cap\O_1}$ is a~diffeomorphism;
		\item Either $\T'$ is constant or $\{z\in\O_0:\T'(z)\in\R\}$ is a~finite union of real analytic curves, one of which is~$L\cap\O_0$.
	\end{enumerate}
\end{lemma}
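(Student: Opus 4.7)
My plan is to prove the five assertions in the order $(1)$, $(4)$, $(2)$, $(3)$, $(5)$, treating $(4)$ as the main step and deducing the others from it fairly formally. Part $(1)$ is direct: the line $L = \R\cdot Q$ is invariant under multiplication by the real number $\lambda$, and the global linearization satisfies $\Psi(\lambda z) = f(\Psi(z))$, so taking $\gamma_* := \Psi(L\cap\lambda^{-1}\O_0) \subset \gamma$ gives an arc containing $p$ that $f$ maps diffeomorphically onto $\gamma$, thanks to the univalence of $\Psi$ on $\O_0$.

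The core of the proof is part $(4)$, where the reality of multipliers enters. For $n$ large let $z_n \in V_n$ be the fixed point of $f^n$ supplied by Lemma~\ref{seq}; it is characterized by $\T(z_n) = \lambda^n z_n$. Using the factorization $f^n|_{\Psi(V_n)} = \Psi|_{\O_1}\circ\lambda^n\circ(\Psi|_{V_n})^{-1}$ together with $\Psi'(\T(z))\T'(z) = \Psi'(z)$ (which follows from $\Psi\circ\T = \Psi$), a direct chain-rule calculation yields
\begin{equation*}
(f^n)'(\Psi(z_n)) \;=\; \frac{\lambda^n}{\T'(z_n)}.
\end{equation*}
Since $\lambda\in\R$ and the left-hand side is real by hypothesis, $\T'(z_n)\in\R$; letting $z_n\to 0$ I obtain $\T'(0) = \Psi'(0)/\Psi'(Q) \in \R$. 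Consequently the tangent lines $\Psi'(0)L$ and $\Psi'(Q)L$ of the arcs $\Psi(L\cap\O_0)$ and $\Psi(L\cap\O_1)$ at $p$ coincide. I would then invoke uniqueness of a local unstable manifold of $p$ in a prescribed tangent direction (in the linearizing chart of a repelling fixed point with \emph{real} multiplier, the invariant smooth curves through the origin are exactly straight lines) to conclude that the two arcs agree as germs at $p$, and, after possibly shrinking $\O_0$ (and correspondingly $\O_1 = \T(\O_0)$), as sets. Univalence of $\Psi$ on $\O_0$ and $\O_1$ then upgrades this to the diffeomorphism $\T\colon L\cap\O_0\to L\cap\O_1$.

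With $(4)$ in hand, $(2)$ and $(3)$ are formal. The map $S_n := \lambda^{-n}\T$ sends $V_n$ strictly into itself, so it is a strict contraction in the hyperbolic metric of $V_n$ with a unique fixed point; by $(4)$ it preserves the real segment $L\cap V_n$, and the fixed point therefore lies on $L$. That $\gamma$ is an unstable manifold for $\Psi(z_n)$ follows from the chain $f^n(\gamma\cap\Psi(V_n)) = \Psi(L\cap\O_1) = \Psi(L\cap\O_0) = \gamma$, where the middle equality is $(4)$. Part $(3)$ is the same chain applied at the preimage $\Psi(\lambda^{-n}Q)\in\gamma$. For $(5)$, the sequence $z_n\in L$ on which $\T'$ is real accumulates at $0$, so by real-analyticity of $\T'$ along $L$ its imaginary part vanishes identically on $L\cap\O_0$; if $\T'$ is non-constant, $(\T')^{-1}(\R)$ is locally the zero set of a nontrivial real-analytic function on $\O_0$, which in a sufficiently small disc is a finite union of real-analytic curves, one of which is $L\cap\O_0$.

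The main obstacle sits inside $(4)$: upgrading tangent-direction agreement at $p$ to set equality of the two arcs. This rests on uniqueness of local unstable manifolds in prescribed tangent directions, and on verifying that $\Psi(L\cap\O_1)$ really is an $f$-invariant germ through $p$ to which that uniqueness applies. The latter is not immediate because the dynamics in the $\O_1$-chart is not plain multiplication; I would handle it by transporting the question back to $\O_0$ via $\Psi\circ\T = \Psi$, using that every point of $\{\lambda^k Q:k\in\Z\}\subset\Psi^{-1}(p)$ satisfies $\Psi'(\lambda^k Q) = \Psi'(Q)$, as one sees by differentiating the functional equation $\Psi(\lambda z) = f(\Psi(z))$ at such points.
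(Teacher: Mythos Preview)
Your argument has a genuine gap in part $(4)$, precisely at the point you flag as the ``main obstacle''. From $\T'(z_n)\in\R$ and $z_n\to 0$ you correctly get $\T'(0)\in\R$, hence the germs $\Psi(L\cap\O_0)$ and $\Psi(L\cap\O_1)$ have the same tangent line at $p$. To invoke uniqueness of an invariant curve with prescribed tangent direction, you need $\Psi(L\cap\O_1)$ to be an $f$-invariant germ at $p$. But $f\bigl(\Psi(L\cap\O_1)\bigr)=\Psi(L\cap\lambda\O_1)$ is a germ of $\Psi(L)$ at the \emph{different} preimage $\lambda Q$, not the original germ; equivalently, in the $\O_0$-chart the curve $\T^{-1}(L)$ would have to be invariant under $z\mapsto\lambda z$, i.e.\ a straight line through $0$, which is exactly what you are trying to prove. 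Your proposed fix, the identity $\Psi'(\lambda^k Q)=\Psi'(Q)$, only says that all these germs share the same tangent direction at $p$; it does not make any one of them $f$-invariant. So the circularity is not broken, and since your derivations of $(2)$, $(3)$, $(5)$ all rest on $(4)$, the whole chain collapses.

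The paper proceeds in essentially the reverse logical order and avoids this trap. After obtaining $\T'(z_n)\in\R$ (your computation), it does \emph{not} pass to the limit $\T'(0)\in\R$; instead it works with the full sequence. In the non-constant case the set $X=\{z\in\O_0:\T'(z)\in\R\}$ is a finite union of real-analytic arcs, and some arc $\beta$ contains infinitely many $z_n$; since $\lambda^n z_n\to Q\in L$, the arc $\beta$ is tangent to $L$ at $0$. The crucial step is a Taylor-expansion argument: writing $\T(z)=Q+a_1z+\dots+a_mz^m+a_{m+1}z^{m+1}+\dots$ with $a_1,\dots,a_m\in\R$, $a_{m+1}\notin\R$, and $\beta(x)=x+ibx^K+o(x^K)$, one compares the orders of vanishing forced by $\T'|_\beta\in\R$ with the relation $\T(z_n)=\lambda^n z_n$ (which pins $\arg z_n$ against $\arg\T(z_n)$) and reaches a contradiction unless $\beta\subset L$. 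This yields $(5)$ and $z_n\in L$ \emph{first}; then $\T'|_{L}\in\R$ together with $\T(0)=Q\in L$ gives $\T(L\cap\O_0)\subset L$ by integration, which is $(4)$; and $(1)$--$(3)$ follow formally, much as you outline. In short, the reality of $\T'$ along $L$ is obtained analytically from the whole sequence $z_n$, not geometrically from a single tangent direction.
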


\begin{proof}
Let $z_n$ be the~sequence defined in the~proof of Lemma~\ref{seq} and let $x_n=\Psi(z_n)$ be the~corresponding periodic point. We are going to show that
\begin{equation}\label{tprimznreal}
	\T'(z_n)\in\R.
\end{equation}
Since $\Psi\circ\lambda^n=f^n\circ\Psi$, hence $\Psi'(\lambda^nz_n)\lambda^n=(f^n)'(x_n)\Psi'(z_n)$. Recall that multipliers of all repelling cycles are real, thus $(f^n)'(x_n)\in\R$, hence $\Psi'(z_n)/\Psi'(\lambda^nz_n)\in\R$. 

Note also that $\Psi(\lambda^nz_n)=f^n(\Psi(z_n))=f^n(x_n)=x_n=\Psi(z_n)$. But for $n$ large enough we have: $z_n\in\O_0$, $\lambda^nz_n\in\O_1$, and since $\T\colon\O_0\to\O_1$~is biholomorphic with $\Psi\circ\T=\Psi$, we conclude that
\begin{equation}\label{Tzn}
	\T(z_n)=\lambda^nz_n. 
\end{equation}
Now, $\Psi'(z_n)=\Psi'(\T(z_n))\T'(z_n)=\Psi'(\lambda^nz_n)\T'(z_n)$ which implies~(\ref{tprimznreal}).

If $\T'$ is constant then $\T$~is an affine map, $\T(z)=az+Q$, where $a=\T'(0)\in\R$ and the~identity $\Psi\circ\T=\Psi$ extends to the~whole plane~$\C$. But this is possible only if $|a|=1$, so we conclude that $\T(z)=\pm{z}+Q$. Then obviously $\T(L)=L$, where $L$~is the~line through $0$ and~$Q$, in particular $\T(L\cap\O_0)=L\cap\O_1$. Moreover, (\ref{Tzn})~implies that $z_n=\frac{Q}{\lambda^n\pm1}\in{L}$. 

Suppose now that $\T'$ is not constant, then the~set $X=\{z\in\O_0:\T'(z)\in\R\}$ is a~finite union of real analytic curves. We will show that one of these curves is~$L\cap\O_0$. For simplicity we may assume without loss of generality that $L$ is the~real line, thus $Q\in\R$.

Let $\beta$ be a~curve in~$X$ containing infinitely many points~$z_n$. Since $\lambda^nz_n\to{Q}\in\R$, thus $\arg(z_n)\to0$ and we conclude that $\beta$~is tangent to $L=\R$ at~$0$. Note that if $\T|_{\R}$ is real, then we are done. Suppose that $\T|_{\R}$ is not real, then
\begin{displaymath} 
	\T(z)=Q+a_1z+a_2z^2+\ldots+a_mz^m+a_{m+1}z^{m+1}+O(z^{m+2}), \quad z\to0, 
\end{displaymath}
where $m$ is chosen so that $a_1,\ldots,a_m\in\R$ and $a_{m+1}\notin\R$. Since $a_1=\T'(0)\in\R\setminus\{0\}$, we have that $m\geq1$.

We are going to show that $\beta\subset\R$. Suppose on the contrary that this is not the~case and recall that $\beta$~is tangent to $L=\R$ at~$0$ so it is of the~form
\begin{displaymath}
	\beta(x)=x+ibx^{K}+o(x^K) \quad \textrm{for some} \quad b\neq0 \quad \textrm{and} \quad K>1.
\end{displaymath}
Let $k\geq2$ be the~smallest subscript for which $a_{k}\neq0$. Since $a_{m+1}\notin\R$, therefore $k\leq{m+1}$. 
If $k=m+1$, then the~condition $T'|_{\beta}\in\R$ implies $\Im\T'(\beta(x))=(m+1)\Im{a_{m+1}}x^m+\ldots\equiv0$, what is impossible since $\Im{a_{m+1}}\neq0$. It follows that~$k\leq{m}$.

Using that $(x+ibx^K+o(x^K))^{k-1}=x^{k-1}+(k-1)x^{k-2}ibx^K+o(x^{k-2+K})$, the~condition $T'|_{\beta}\in\R$ gives
\begin{displaymath}
	\Im\T'(\beta(x))=ka_k(k-1)bx^{k-2+K}+\ldots+(m+1)\Im{a_{m+1}}x^m+\ldots\equiv0.
\end{displaymath}
Note that, since $a_k\neq0$ and $\Im{a_{m+1}}\neq0$, this is possible only if $k-2+K=m$. Therefore,
\begin{equation}\label{mK}
	m\geq{K}
\end{equation}
because $k\geq2$.

Since $z_n\in\beta$, it is of the~form $z_n=t_n+ibt_n^K+o(t_n^K)$, where $t_n\to0$ and so $\arg{z_n}\sim{t_n^{K-1}}$. On the~other hand, we have that $\Re\T(z_n)\to{Q}\neq0$ and $\Im\T(z_n)=O(t_n^K)$ (in view of~(\ref{mK}) and the~form of~$\T$), therefore $\arg\T(z_n)=O(t_n^{K})$. But this clearly contradicts~(\ref{Tzn}) what proves the~second part of point~\textit{5}. And since any curve from~$X$ containing infinitely many points~$z_n$ is actually the~line~$L$, therefore for all~$n$ bigger than some~$N$, we have $z_n\in{L}$.

Now, $\T\colon\O_0\to\O_1$ is biholomorphic with $\T(0)=Q$ and $\T'|_{L\cap\O_0}$ is real, thus for any~$z\in{L}\cap\O_0$, we have $\T(z)\in{L}$, giving~\textit{4}.

If we take $\gamma_*=\Psi(\lambda^{-1}(L\cap\O_0))$, then it is clear that the~curve $\gamma=\Psi(L\cap\O_0)$ is an~unstable manifold for~$p$. In order to prove~\textit{2.}, recall that $f^n\colon\Psi(V_n)\to\Psi(\O_0)$ is biholomorphic and define $\gamma_*'=\Psi(L\cap{V_n})$, where $V_n=\lambda^{-n}\O_1$ comes from Lemma~\ref{seq}. Then $f^n(\gamma_*')=\Psi(\lambda^n(L\cap{V_n}))=\Psi(L\cap\O_1)=\gamma$. The~same argument proves also~\textit{3.}, since $\lambda^{-n}Q\in{L}\cap{V_n}$.
\end{proof}

Recall that $Q$ was an~arbitrary $\Psi$-preimage of~$p$ different from~$0$ and $\T\colon\O_0\to\O_1$ was a~biholomorphic map such that $\Psi\circ\T=\Psi$ on~$\O_0$. So far we showed that $\T'$~is real on the~line~$L$, in what follows we are going to prove that $T'$~is actually constant.

\begin{lemma}\label{Taffine}
	For any $Q\in\Psi^{-1}(p)\setminus\{0\}$, we have $\T(z)=\pm{z}+Q$ and $\Psi\circ\T=\Psi$ on~$\C$.
\end{lemma}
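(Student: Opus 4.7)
My plan has two main parts: first, reduce the lemma to showing that $\T'$ is constant (equivalently, that $\T$ is affine); second, deduce from this that $\T(z) = \pm z + Q$ and that $\Psi\circ\T = \Psi$ extends globally.

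For the first part, I would appeal to Lemma~\ref{seqL}, which already reduces matters to the case where $X = \{z \in \O_0 : \T'(z) \in \R\}$ is a finite union of real analytic curves with $L \cap \O_0$ among them. The task is to rule out this non-affine case. Since $Q$ was arbitrary in $\Psi^{-1}(p) \setminus \{0\}$, my strategy is to exploit other preimages $Q^{*} \in \Psi^{-1}(p) \setminus \{0\}$ lying on lines $L^{*} \neq L$ through the origin in order to manufacture additional curves in $X$, eventually exceeding the finite bound. Concretely, Lemma~\ref{seqL} applied with $Q^{*}$ in place of $Q$ furnishes sequences of repelling periodic points $x_n^{*} = \Psi(z_n^{*})$ with $z_n^{*} \in L^{*} \cap \O_0$ and $\lambda^{n} z_n^{*} \to Q^{*}$, all having real multipliers $(f^n)'(x_n^{*})$. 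Evaluating the fixed $\T$ at $z_n^{*}$, the identity $\Psi \circ \T = \Psi$ forces $\T(z_n^{*})$ to be the unique preimage of $x_n^{*}$ near $Q$ (by the implicit function theorem and Lemma~\ref{regpreimages}), and combining this with $\Psi(\lambda z) = f(\Psi(z))$ should express $\T'(z_n^{*})$ as a real factor times a ``correction ratio'' of the form $\Psi'(\lambda^{n} z_n^{*})/\Psi'(\T(z_n^{*}))$ whose limit is $\Psi'(Q^{*})/\Psi'(Q)$. Running this argument for infinitely many directions $L^{*}$ (which is possible since $\Psi^{-1}(p)$ cannot lie on a single line through the origin, given that $\Psi$ is transcendental and intertwines multiplication by $\lambda$ with $f$) should place new components of $X$ tangent to infinitely many distinct directions at $0$, contradicting the finiteness asserted by Lemma~\ref{seqL}. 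Hence $\T'$ must be constant.

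For the second part, once $\T(z) = az + Q$ with $a = \T'(0) \in \R$ (real because $\T'|_{L \cap \O_0}$ is real by Lemma~\ref{seqL}), the identity $\Psi \circ \T = \Psi$ extends from $\O_0$ to all of $\C$ by analytic continuation. Iterating the extended relation yields $\Psi\bigl(a^{n} z + (1 + a + \cdots + a^{n-1}) Q\bigr) = \Psi(z)$ for every $n$ and every $z \in \C$. For $|a| < 1$ the argument of $\Psi$ on the left converges to $Q/(1-a)$, forcing $\Psi$ to be constant, while $|a| > 1$ similarly contradicts the value distribution of the transcendental $\Psi$. Therefore $|a| = 1$, and since $a \in \R$, we conclude $a = \pm 1$ and $\T(z) = \pm z + Q$.

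The main obstacle is the transfer argument in the first paragraph: the set $X$ is defined using the fixed map $\T = \T_Q$, while Lemma~\ref{seqL} applied to $Q^{*}$ produces information about the different biholomorphism $\T_{Q^{*}}$. Bridging the two requires a careful comparison of $\Psi'$ at the two distinct preimages $\lambda^{n} z_n^{*} \to Q^{*}$ and $\T(z_n^{*}) \to Q$ of the same periodic point $x_n^{*}$, and a quantitative handle on the correction ratio above so that realness is transmitted to a curve through $0$ in a genuinely new direction. I also expect that verifying $\Psi^{-1}(p)$ spans more than one line through the origin will require a short complex-analytic argument using the transcendental growth of $\Psi$.
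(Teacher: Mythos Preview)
Your overall architecture matches the paper's: show that $\T'$ is real on infinitely many lines through the origin, contradict the finiteness clause of Lemma~\ref{seqL}(5), and then deduce $\T(z)=\pm z+Q$ exactly as you do in your second paragraph. The gap is precisely the one you flag as ``the main obstacle,'' and your proposed resolution does not close it. Your computation gives
\[
\T'(z_n^{*})=\frac{\Psi'(z_n^{*})}{\Psi'(\T(z_n^{*}))}
=\frac{\lambda^{n}}{(f^{n})'(x_n^{*})}\cdot\frac{\Psi'(\lambda^{n}z_n^{*})}{\Psi'(\T(z_n^{*}))},
\]
so $\T'(z_n^{*})$ is indeed a real multiple of $\Psi'(\lambda^{n}z_n^{*})/\Psi'(\T(z_n^{*}))\to\Psi'(Q^{*})/\Psi'(Q)$. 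But there is no reason for this last ratio to be real: $\lambda^{n}z_n^{*}\to Q^{*}$ and $\T(z_n^{*})\to Q$ are \emph{different} $\Psi$-preimages of $p$, and nothing so far relates $\arg\Psi'(Q^{*})$ to $\arg\Psi'(Q)$. Consequently the points $z_n^{*}$ need not lie in $X=\{\T'\in\R\}$ at all, and no contradiction with the finiteness of $X$ follows. Varying $Q^{*}$ only varies the limiting argument of $\T'(z_n^{*})$; it does not produce new curves in the fixed real-analytic set $X$.

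The paper resolves this by a genuinely different mechanism. Fixing one periodic point $x=\Psi(z)$ with $z\in L'$, it introduces the \emph{second} linearization $\hat\Psi$ of $f^{k}$ at $x$ and uses Lemma~\ref{seqL} (applied now to $\hat\Psi$) together with the fact that $\gamma'=\Psi(L'\cap\O_0')$ is an unstable manifold for both $p$ and $x$. This produces curves $M_j\subset\O_0$ through $z$ and through $w_j=\lambda^{-jk}\T(z)$ that converge in $C^{1}$ to a segment of $L'$, and for which one can show $\lambda^{jk}M_j$ and $\T(M_j)$ agree near $\T(z)$. Comparing tangent directions and passing to the limit yields $\Ta_{z}(L')=\Ta_{\T(z)}(\T(L'))$, i.e.\ $\T'(z)\in\R$. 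In short, realness on $L'$ is obtained geometrically via invariant curves and a tangent-line limit, not algebraically via a derivative ratio; your correction ratio $\Psi'(Q^{*})/\Psi'(Q)$ never enters, and the obstacle you identified is circumvented rather than overcome.
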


\begin{proof}
Notice that $\Psi^{-1}(J(f)))$ cannot be contained in a~finite family of lines since the~Hausdorff dimension of~$J(f)$ is greater than one (see~\cite{stallard}). In particular $\Psi^{-1}(J(f))\nsubseteq{L}$. Recall that $p$~is not an~asymptotic value so it is not exceptional, hence its backward orbit is dense in~$J(f)$. There exists thus a~point $Q^1\in\C\setminus{L}$ such that $f^m(\Psi(Q^1))=p$ for some $m\geq1$. Define $Q'=\lambda^m{Q^1}$, then
\begin{displaymath}
	\Psi(Q')=\Psi(\lambda^m{Q^1})=f^m(\Psi(Q^1))=p
\end{displaymath}
and denote by $L'$ the~line through $0$ and~$Q'$. Note that we can construct infinitely many such lines. We will show that $\T'$ is real on~$L'$.

By~Lemma~\ref{regpreimages} we have $\Psi'(Q')\neq0$ so we can choose $\O_0'$ and $\O_1'$, neighbourhoods of $0$ and $Q'$ respectively, on which $\Psi$~is univalent and such that $\Psi(\O_0')=\Psi(\O_1')$. As before we want $\O_0'$ to be a~round disc. Applying Lemma~\ref{seq} and \ref{seqL} to the~point~$Q'$ we obtain a~sequence $z_k'\in{L'}$ such that $z_k'\to0$, $\lambda^kz_k'\to{Q'}$ and $x_k'=\Psi(z_k')$ is a~repelling point for~$f$ of period~$k$. Fix a~point $z=z_k'$ so that $\lambda^kz\in\O_1'$ and $x=\Psi(z)$, the~corresponding periodic point, does not belong to the~forward orbit of singular values. Note that there are infinitely many points~$z$ satisfying these conditions and we will show that for any such point, $\T'(z)\in\R$.

By Lemma~\ref{seqL}, the~curve $\gamma'=\Psi(L'\cap\O_0')=\Psi(L'\cap\O_1')$ is an~unstable manifold for~$p$ and for~$x$. Note that it is connected, smooth and has no intersections ($\Psi|_{\O_0'}$ is univalent). Since $\gamma'$ is an~unstable manifold for~$x$, there exists a~nested sequence of curves $\gamma_{i,*}'\supset\gamma_{i+1,*}'\ni{x}$, with $\gamma_{0,*}'=\gamma'$, such that $f^k$ maps $\gamma_{i+1,*}'$ diffeomorphically onto~$\gamma_{i,*}'$ and $\diam(\gamma_{i,*}')\to0$.

Now, consider a~linearization $\hat{\Psi}$ of~$f^k$ associated with~$x$, defined in a~neighbourhood of~$0$, i.e.
\begin{equation}	\label{xlin}
	f^k\circ\hat{\Psi}=\hat{\Psi}\circ\mu\,, \quad \textrm{where}\quad \mu=(f^k)'(x)\in\R \quad\textrm{and} \quad \hat{\Psi}(0)=x.
\end{equation}
Take $i$ so large that the~curve $\gamma_{i,*}'$ is short enough and there exists a~curve~$\hat{L}_i'\ni0$ which is mapped by~$\hat{\Psi}$ differomorphically onto~$\gamma_{i,*}'$. Note that
\begin{displaymath}
	f^{ik}|_{\gamma_{i,*}'}=\hat{\Psi}\circ\mu^i\circ\big(\hat{\Psi}|_{\hat{L}_i'}\big)^{-1}
\end{displaymath}
is a~diffeomorphism onto~$\gamma'$ and therefore $\hat{\Psi}$ restricted to the~curve $\hat{L}'=\mu^i\hat{L}_i'$ is also a~diffeomorphism and $\hat{\Psi}(\hat{L}')=\gamma'$. As $p\in\gamma'$, there exists $\hat{w}\in\hat{L}'$ such that $\hat{\Psi}(\hat{w})=p$.

Since $\gamma'$ is an~invariant manifold for~$x$ and $\hat{\Psi}$ is the~linearization associated with~$x$, the~curve~$\hat{L}'$ must be invariant under $z\to\mu{z}$. But the~only smooth curve through~$0$ invariant under multiplication by a~real number is a~line segment, thus $\hat{L}'$ must be contained in a~line~$\hat{M}$ through~$0$.

Let $z'=\T(z)\in\O_1$, where $z\in{L'}$ was the~point chosen from the~sequence $z_k'\to0$. For all large $j$ we have that $w_j=\lambda^{-jk}z'\in\O_0$. Note that $\Psi(w_j)\to{p}=\Psi(\hat{w})$ as $j\to\infty$. Since $\hat{\Psi}|_{\hat{L}'}$ is a~diffeomorphism and $\hat{w}\in\hat{L}'$, therefore for every $j$ large enough there exists near~$\hat{w}$ unique point $\hat{w}_j$ such that $\hat{\Psi}(\hat{w}_j)=\Psi(w_j)$.

Note that
\begin{displaymath}
	\hat{\Psi}(\mu^j\hat{w}_j)=f^{jk}\circ\hat{\Psi}(\hat{w}_j)=f^{jk}\circ\Psi(w_j)=\Psi(\lambda^{jk}w_j)=\Psi(z')=x.
\end{displaymath}
Let $\hat{M}_j'$ be the~line through $0$ and $\mu^j\hat{w}_j$ and let $\hat{M}_j\subset\hat{M}_j'$ be an~open line segment containing the~line segment $[\hat{w}_j,0]$ and contained in its small neighbourhood. Since $x$~is not an~iterate of a~singular value, by Lemma~\ref{regpreimages}, there exist neighbourhoods $\hat{\O}_0\ni0$ and $\hat{\O}_j\ni\mu^j\hat{w}_j$ on each of which $\hat{\Psi}$ is univalent and $\hat{\Psi}(\hat{\O}_0)=\hat{\Psi}(\hat{\O}_j)$. Next, applying Lemma~\ref{seqL} to the~map $\hat{\Psi}$ (taking the~line $\hat{M}_j'$ and the~point $\mu^j\hat{w}_j$ instead of $L$ and~$Q$), we obtain that $\hat{\Psi}(\hat{M}_j'\cap\hat{\O}_0)$ is an~invariant manifold for~$x$ and $\hat{\Psi}(\hat{M}_j'\cap\hat{\O}_0)=\hat{\Psi}(\hat{M}_j'\cap\hat{\O}_j)$.

Again by Lemma~\ref{seqL} and in view of~(\ref{xlin}), there exist small neighbourhoods $\hat{V}_j\ni\hat{w}_j$, $\hat{V}^1_j\ni\mu^j\hat{w}_j$ and $\hat{V}^0_j\ni0$ such that
\begin{equation}\label{inclus1}
	f^{jk}(\hat{\Psi}(\hat{M}_j'\cap\hat{V}_j))=\hat{\Psi}(\mu^j(\hat{M}_j'\cap\hat{V}_j))=\hat{\Psi}(\hat{M}_j'\cap\hat{V}^1_j)= \hat{\Psi}(\hat{M}_j'\cap\hat{V}^0_j) \subset \hat{\Psi}(\hat{M}_j).
\end{equation}
Since $\hat{w}_j$ lies close to~$\hat{w}$ and $\hat{\Psi}|_{[\hat{w},0]}$ is a~diffeomorphism, $\hat{\Psi}(\hat{M}_j)$ is a~smooth curve connecting the~points $x$ and~$\hat{\Psi}(\hat{w}_j)=\Psi(w_j)$ and lying close to~$\hat{\Psi}([\hat{w},0])=\Psi([0,z])$ (which is a~subarc of~$\gamma'$ connecting $p$ and~$x$). There exists therefore a~curve $M_j\subset\O_0$, passing through $w_j$ and~$z$, such that $\Psi(M_j)=\hat{\Psi}(\hat{M}_j)$. By~(\ref{inclus1}), there exists a~small neighbourhood~$V_j$ of~$w_j$ such that
\begin{equation}\label{inclus2}
	\Psi(\lambda^{jk}(M_j\cap{V_j}))=f^{jk}(\Psi(M_j\cap{V_j}))\subset \Psi(M_j)=\Psi(\T(M_j)).
\end{equation}

Denote by $\Ta_q(l)$ a~line tangent to a~curve~$l$ at a~point~$q$. Since $\lambda^{jk}w_j=z'$ and the~curves $\lambda^{jk}M_j$ and $\T(M_j)$ both pass through~$z'$, by~(\ref{inclus2}) these curves actually agree near~$z'$. In particular
\begin{equation}\label{tangent1}
	\Ta_{z'}(\lambda^{jk}M_j)=\Ta_{z'}(\T(M_j)).
\end{equation}
Note that $\Ta_{z'}(\lambda^{jk}M_j)=\Ta_{w_j}(M_j)$. Recall that $\hat{M}_j'$ is the~line containing $[\hat{w}_j,0]$, $\hat{w}_j\to\hat{w}$ as $j\to\infty$ and $\hat{M}$ is the~line through $0$ and~$\hat{w}$, therefore $\hat{M}_j'$ converges to~${\hat{M}}$. We also have $\hat{\Psi}(\hat{L}')=\gamma'=\Psi(L'\cap\O_0')$, where $\hat{L}'\subset\hat{M}$, and $\Psi(M_j)=\hat{\Psi}(\hat{M}_j)$ with $\hat{M}_j\subset\hat{M}_j'$. Since $\hat{\Psi}$~is a~diffeomorphism on a~neighbourhood of~$[0,\hat{w}]$, we get therefore that $M_j$~converges to a~line segment in~$L'$ in $C^1$-topology.

We conclude that $\Ta_{z'}(\lambda^{jk}M_j)=\Ta_{w_j}(M_j)\to\Ta_{0}(L')=\Ta_z(L')$ as $j\to\infty$ (since $z\in{L'}$) and also $\Ta_{z'}(\T(M_j))\to\Ta_{z'}(\T(L'))$ . This and~(\ref{tangent1}) imply that
\begin{displaymath}
	\Ta_z(L')=\Ta_{z'}(\T(L')),
\end{displaymath}
hence $\T'(z)\in\R$ as we claimed. Since it holds for a~sequence of points $z=z_k'\in{L'}$, $z_k'\to0$, we obtain that $\T'(z)\in\R$ for every~$z\in{L'}$.

Since there are infinitely many such lines passing through~$0$ on which $\T'$ is real, we conclude that $\T'$~must be constant. Thus $\T$~is an~affine map and the~identity $\Psi\circ\T=\Psi$ holds on the~whole~$\C$. But, as we mentioned in the~proof of Lemma~\ref{seqL}, it may happen only if it is of the~form $\T(z)=\pm{z}+Q$.
\end{proof}

Recall that, by Lemma~\ref{regpreimages}, $\Psi^{-1}(p)$~is a~regular fibre. As a~consequence of Lemma~\ref{Taffine}, for every pair $Q_1,Q_2\in\Psi^{-1}(p)$, there exists an~affine map $\T(z)=\pm{z}+c$ such that $\Psi\circ\T=\Psi$ on~$\C$ and $\T(Q_1)=Q_2$. It follows from \textit{(2')} of Definition~\ref{aut} that $\Psi$~is automorphic with respect to a~discrete group of isometries $\Gamma\subset\{\T:\T(z)=\pm{z}+c\}$.

This allows us to define a~measurable line field almost everywhere on~$\C$ by the~formula
\begin{displaymath}
	u(w)=\left(\frac{\psi'(z)}{|\psi'(z)|}\right)^{2} \quad \textrm{for any}\quad z\in\psi^{-1}(w).
\end{displaymath}
Note that, since $\Psi$ is automorphic with respect to~$\Gamma$, the~line field $u$ is well defined almost everywhere: if $z_1,z_2\in\psi^{-1}(w)$, then $\psi'(z_1)=\pm\psi'(z_2)$. Moreover, since $\lambda\in\R$, (\ref{lineariz})~immediately implies that 
\begin{displaymath}
	u(f(w))=\left(\frac{\psi'(\lambda{z})\lambda}{|\psi'(\lambda{z})\lambda|}\right)^{2}=\left(\frac{(f\circ\psi)'(z)}{|(f\circ\psi)'(z)|}\right)^{2}=\left(\frac{f'(w)}{|f'(w)|}\right)^2u(w).
\end{displaymath}
where $z\in\Psi^{-1}(w)$, hence $u$~is $f$-invariant. Thus, we have defined a~measurable $f$-invariant line field~$u$ on~$\C$ which is univalent in a~neighbourhood of the Julia set~$J(f)$ (cf.~{\cite[Definition~4.1]{rvs}}). However, it is impossible for a~transcendental function to posses such an~invariant line field what was proved in~\cite[Theorem~6.1]{rvs}.
This finishes the proof of Theorem~\ref{main}.

\end{document}